\def\qed{\hfill$\Box$\vspace{12pt}}
\long\def\delete#1{}
\newcommand{\be}{\begin{equation}}
\newcommand{\ee}{\end{equation}}
\newcommand{\bea}{\begin{eqnarray}}
\newcommand{\eea}{\end{eqnarray}}
\newcommand{\bean}{\begin{eqnarray*}}
\newcommand{\eean}{\end{eqnarray*}}
\def\deg{{\rm deg}}
\newtheorem{thm}{Theorem}[section]
\newtheorem{lem}[thm]{Lemma}
\newtheorem{prop}[thm]{Proposition}
\numberwithin{equation}{section}
\title{Laplacian spectral characterization of dumbbell graphs and theta graphs\thanks{Supported by the Natural Science Foundation of China (No.11361033).}}
\author{ Xiaogang Liu$^{1}$
\,and\, Pengli Lu$^{2,}$\thanks{Corresponding author.   E-mail
addresses: xiaogliu.yzhang@gmail.com (\textbf{X. Liu}), lupengli88@163.com (\textbf{P. Lu}).}
\\
\footnotesize{1. Department of Mathematics and Statistics, The University of Melbourne, Parkville, VIC 3010, Australia}\\
\footnotesize{2. School of Computer and Communication, Lanzhou University of Technology, Lanzhou, 730050, Gansu, P.R. China}
}
\date{}
\begin{document}

\openup 0.5\jot
\maketitle

\begin{abstract}
Let $P_n$ and $C_n$ denote the \emph{path} and \emph{cycle} on $n$ vertices respectively. The \emph{dumbbell graph}, denoted by $D_{p,k,q}$, is the graph obtained from two cycles $C_p$, $C_q$ and a path $P_{k+2}$ by identifying each pendant vertex of $P_{k+2}$ with a vertex of a cycle respectively. The \emph{theta graph}, denoted by $\Theta_{r,s,t}$, is the graph formed by joining two given vertices via three disjoint paths $P_{r}$, $P_{s}$ and $P_{t}$ respectively. In this paper, we prove that all dumbbell graphs as well as all theta graphs are determined by their Laplacian spectra.

\bigskip

\noindent\textbf{Keywords:} Dumbbell graph, Theta graph, $L$-spectrum, $L$-cospectral graph, $L$-DS graph

\bigskip

\noindent{{\bf AMS Subject Classification (2010):} 05C50}
\end{abstract}

\section{Introduction}

All graphs considered in this paper are simple and undirected. Let $G=(V(G),E(G))$ be a graph with vertex set $V(G)=\{v_1,v_2,\ldots,v_n\}$ and edge set $E(G)$. The \emph{adjacency matrix} of $G$, denoted by $A(G)$, is the $n \times n$ matrix whose $(i,j)$-entry is $1$ if $v_i$ and $v_j$ are adjacent and $0$ otherwise.  Let $d_i=d_i(G)=d_G(v_i)$ be the degree of the vertex $v_i$, and $\deg(G) = (d_1,d_2,\ldots,d_n)$ the degree sequence of $G$. We call
$L(G)=D(G)-A(G)$ (respectively, $Q(G)=D(G)+A(G)$) the \emph{Laplacian matrix} (respectively, \emph{signless Laplacian matrix}) of $G$, where $D(G)$ is the $n\times n$ diagonal matrix with $d_1,d_2,\ldots,d_n$ as diagonal entries.
Given an $n \times n$ matrix $M$, denote by $\phi(M;x)=\det(xI_n-M)$, or simply $\phi(M)$, the characteristic polynomial of $M$, where $I_n$ is the identity matrix of size $n$. The roots of the equation $\phi(M;x)=0$ are called the eigenvalues of $M$. We call the eigenvalues of $A(G)$, $L(G)$ and $Q(G)$ the \emph{adjacency eigenvalues}, \emph{Laplacian eigenvalues} and \emph{signless Laplacian eigenvalues} of $G$ respectively. The multiset of the eigenvalues of $A(G)$ is called the \emph{$A$-spectrum} of $G$.  Two graphs are said to be \emph{$A$-cospectral} if they have the same $A$-spectrum. A graph is called an \emph{$A$-DS graph} if it is \emph{determined by its $A$-spectrum}, meaning that there is no other non-isomorphic graphs $A$-cospectral with it. Similar terminology will be used for $L(G)$ and $Q(G)$, and the corresponding notations differ by a prefix ($A$-, $L$-, or $Q$-, respectively).

Which graphs are determined by their spectra? This is a classical question in spectral graph theory, which was raised by G\"{u}nthard and Primas \cite{kn:Gunthard56} in 1956 with motivations from chemistry. It is known that it is often very challenging to check whether a graph is determined by its spectrum or not, even for some simple-looking graphs. Although many graphs have been proved to be determined by their ($A$, $L$ or/and $Q$) spectra recently  \cite{kn:Boulet08, kn:Boulet09, kn:Brouwer12, kn:Bu12, kn:BuZhou12, kn:BuZhou13, kn:Cvetkovic95, kn:Cvetkovic10, kn:vanDam03, kn:vandam09, kn:Haemers08, kn:Ramezani09, kn:WangHuang09, kn:WangBelardo10, kn:WangHuang009, kn:Lu09, kn:LiuF12, kn:LiuF13, kn:LiuM10, kn:LiuM14, kn:Liu14, kn:Wangliu10, kn:Mirzakhah10, kn:Omidi07, kn:Zhou12, kn:Zhou13}, the problem of determining $A$-DS (respectively, $L$-DS, $Q$-DS) graphs is still far from being completely solved. Therefore, finding new families of DS graphs deserves further attention.

As usual, let $P_n$ and $C_n$ denote the \emph{path} and \emph{cycle} on $n$ vertices respectively. The \emph{dumbbell graph}, denoted by $D_{p,k,q}$, is the graph obtained from two cycles $C_p$, $C_q$ and a path $P_{k+2}$ by identifying each pendant vertex of $P_{k+2}$ with a vertex of a cycle respectively. The \emph{theta graph}, denoted by $\Theta_{r,s,t}$, is the graph formed by joining two given vertices via three disjoint paths $P_{r}$, $P_{s}$ and $P_{t}$ respectively (See Fig. \ref{DTeta1}). Due to the symmetry, in this paper we let $p\ge q\ge3$, $k\ge0$ in $D_{p,k,q}$ and $r\ge s\ge t\ge0$, $(s,t)\neq(0,0)$ in $\Theta_{r,s,t}$.
It is known \cite{kn:Ramezani09,kn:WangHuang009} that all theta graphs with no unique cycle $C_4$ are determined by their $A$-spectra. All dumbbell graphs $D_{p,k,q}$ without cycle $C_4$ satisfying $k\ge1$, except for $D_{3q,1,q}$, were proved to be determined by their $A$-spectra\cite{kn:WangHuang09,kn:WangBelardo10}. Moreover, all dumbbell graphs different from $D_{3q,0,q}$ and all theta graphs are determined by their $Q$-spectra \cite{kn:WangBelardo10}.

\begin{figure}
\centering
\includegraphics[height=7.6cm]{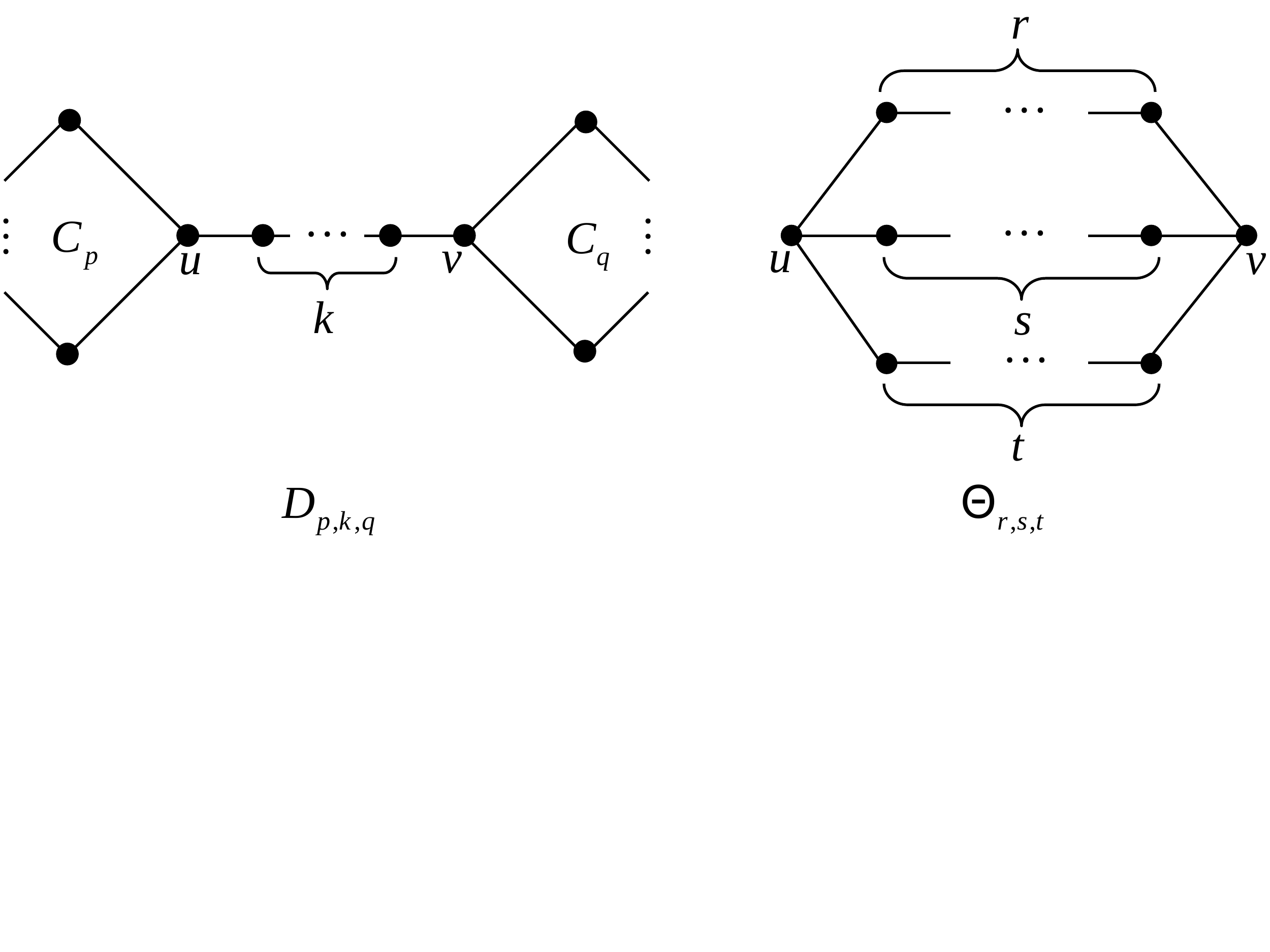}
\vspace{-3.46cm}
\caption{\small The graphs $D_{p,k,q}$ and $\Theta_{r,s,t}$.}
\label{DTeta1}
\end{figure}

Motivated by the results above, in this paper, we investigate the Laplacian spectral characterization of dumbbell graphs and theta graphs respectively. We prove that all dumbbell graphs as well as all theta graphs are determined by their Laplacian spectra. Our main results are as follows:
\begin{thm}
Every dumbbell graph is determined by its $L$-spectrum.
\end{thm}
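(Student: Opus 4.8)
The plan is to read off from the Laplacian spectrum a chain of combinatorial invariants of any graph $G$ that is $L$-cospectral with a fixed dumbbell $D_{p,k,q}$, and to force these invariants to pin down $G \cong D_{p,k,q}$. First I would record the standard $L$-spectral data: the order $n$ is the number of eigenvalues, the size is $m = \tfrac12\sum_i\lambda_i = \tfrac12\,\mathrm{tr}\,L(G)$, the number of components is the multiplicity of the eigenvalue $0$, and the number of spanning trees is $\tau(G) = \tfrac1n\prod_{\lambda_i\neq 0}\lambda_i$. Since $D_{p,k,q}$ is connected with $n = p+q+k$ and $m = n+1$, any $L$-cospectral $G$ is connected and bicyclic (cyclomatic number $m-n+1 = 2$), with $\tau(G) = \tau(D_{p,k,q}) = pq$ (a spanning tree is obtained by deleting one edge from each of the two cycles).

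Next I would fix the degree sequence. From $\mathrm{tr}\,L^2 = \sum_i(d_i^2+d_i)$ the spectrum also determines $\sum_i d_i^2$. For $D_{p,k,q}$ one has $\sum_i d_i = 2n+2$ and $\sum_i d_i^2 = 4n+10$, so, writing $e_i = d_i-2$, the graph $G$ inherits $\sum_i e_i = 2$ and $\sum_i e_i^2 = 2$. As $G$ is connected, $e_i\ge -1$, whence $e_i(e_i-1)\ge 0$ for every $i$; then $\sum_i e_i(e_i-1) = \sum_i e_i^2 - \sum_i e_i = 0$ forces each $e_i\in\{0,1\}$, and $\sum_i e_i = 2$ leaves exactly two vertices of degree $3$ and all remaining vertices of degree $2$. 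A connected bicyclic graph with this degree sequence is either a dumbbell graph or a theta graph $\Theta_{r,s,t}$, so $G$ is one of these two types; it remains to exclude the theta case and to recover $(p,k,q)$.

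For the last step the guiding idea is that the higher spectral moments encode the cycle structure: in $\mathrm{tr}\,L^\ell = \sum_i\lambda_i^\ell$, writing $L = D-A$, each step of a closed ``Laplacian walk'' either stays put (weight $d_i$) or crosses an edge (weight $-1$), and—given the now-fixed degree sequence—the only contributions not attributable to backtracking or degree data are the non-backtracking edge-traversals of $\ell$-cycles. A dumbbell has exactly two cycles, of lengths $p$ and $q$, whereas a theta graph has three (one per pair of its three paths); moreover the total cycle length is $p+q\le n$ for a dumbbell but $2m = 2n+2$ for a theta, so no theta graph can be $L$-cospectral with a dumbbell, and the two recovered lengths $\{p,q\}$ together with $n = p+q+k$ determine $k$ as well. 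To make this airtight I would actually compute $\phi(L(D_{p,k,q}); x)$ in closed form: peeling off the $k$ internal degree-$2$ path vertices one at a time yields a two-term linear recursion for the characteristic polynomial, solvable in terms of the (explicit, Chebyshev-type) Laplacian polynomials of the cycles $C_p,C_q$ and of paths, and the closed form displays $p,k,q$ injectively while differing from every theta polynomial.

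The main obstacle is precisely this final computation together with its exceptional cases. The bookkeeping in the path-reduction recursion, and the verification that the resulting closed form really is injective on the parameter set $\{(p,k,q): p\ge q\ge 3,\ k\ge 0\}$ and disjoint from the theta family, is the technical heart. Particular care is needed for the small cycles, where a cycle of length $3$ or $4$ makes the triangle count $N_\triangle$ (read from $\mathrm{tr}\,L^3 = \sum_i d_i^3 + 3\sum_i d_i^2 - 6N_\triangle$) or the quadrilateral count nonzero and can create near-coincidences, and for the boundary case $k=0$, where the two degree-$3$ vertices become adjacent and short closed walks behave differently. One must confirm that none of these produce a sporadic $L$-cospectral mate—in contrast with the signless-Laplacian setting, where exactly the graph $D_{3q,0,q}$ failed to be $Q$-DS—so the crux of the theorem is showing that for the Laplacian no such exception survives.
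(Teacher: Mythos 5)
Your first half is correct and coincides with the paper's Lemma 3.4: from the $L$-spectrum you read off $n$, $m=n+1$, connectivity, and $\sum_i d_i^2=4n+10$, and your computation with $e_i=d_i-2$ (giving $\sum e_i=\sum e_i^2=2$, hence $e_i\in\{0,1\}$ with exactly two $1$'s) is exactly how the paper forces the degree sequence $(3^2,2^{n-2})$ and concludes that any $L$-cospectral mate is a dumbbell or a theta graph. The spanning-tree identities $\tau(D_{p,k,q})=pq$ and $\tau(\Theta_{r,s,t})=(r+1)(s+1)+(s+1)(t+1)+(r+1)(t+1)$ are also the ones the paper uses.

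The gap is in the second half, which is where the actual work lies. Your moment-based heuristic is not sound as stated: for the Laplacian, $\mathrm{tr}\,L^\ell$ mixes genuine $\ell$-cycle contributions with degree-weighted backtracking walks, and beyond $\ell=3$ these do not disentangle into ``the two cycle lengths'' even once the degree sequence is fixed; likewise ``total cycle length'' ($p+q$ versus $2n+2$) is a combinatorial quantity you have not shown to be a Laplacian invariant, so it cannot by itself exclude theta mates. You correctly identify the remedy --- compute $\phi(L(D_{p,k,q}))$ in closed form and check injectivity --- but you leave it entirely unexecuted and yourself call it the main obstacle, so the proof is not complete. For the record, the paper closes it as follows: substituting $y$ with $y^2-(x-2)y+1=0$ turns $\phi(U_r)$ into $\frac{y^{2r+2}-1}{y^{r+2}-y^r}$ and the recursion into an explicit Laurent polynomial identity $f_D(p,k,q;y)=f_D(p',k',q';y)$ (resp. $=f_\Theta(r,s,t;y)$); the lowest-degree term of $f_D$ is $2(-1)^qy^{q}$ or $y^{2k+4}$ and that of $f_\Theta$ is $2(-1)^{s+t}y^{2+s+t}$ or $y^{2t+4}$, and comparing these, together with the vertex count and the spanning-tree count (plus one evaluation of $\phi(L;4)$ in the subcase $k=t\ge1$ and a second lowest-term comparison when $k=t=0$), pins down $(p,k,q)$ and rules out every theta graph. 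Until you carry out an argument of this kind, the cases you flag (small cycles, $k=0$) remain genuinely open in your write-up rather than merely delicate.
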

\begin{thm}
Every theta graph is determined by its $L$-spectrum.
\end{thm}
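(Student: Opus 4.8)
The plan is to follow the standard programme for $L$-DS results: first read off as many invariants of a cospectral mate as possible, then pin down its coarse structure, and finally separate it from all competitors by a closer look at the characteristic polynomial. So suppose $H$ is a graph $L$-cospectral with $\Theta=\Theta_{r,s,t}$, and let $n$ be their common number of vertices. From the $L$-spectrum I would recover: the number of vertices $n$ (the number of eigenvalues); the number of edges $m=\tfrac12\sum_i\lambda_i=n+1$, so that $H$ is bicyclic; the number of components as the multiplicity of the eigenvalue $0$, which is $1$, so $H$ is connected; the number of spanning trees $\tau=\tfrac1n\prod_{\lambda_i\neq 0}\lambda_i$ by the matrix-tree theorem; and, via $\sum_i\lambda_i^2=\mathrm{tr}(L^2)=\sum_i d_i^2+2m$, the value of $\sum_i d_i^2$, which for $\Theta$ equals $4n+10$.

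Next I would fix the degree sequence and hence the coarse shape of $H$. From $\sum_i d_i=2(n+1)$ and $\sum_i d_i^2=4n+10$ one forms $\sum_i(d_i-1)(d_i-2)=\sum_i d_i^2-3\sum_i d_i+2n=4$. Since $(d-1)(d-2)\ge 0$, vanishing exactly for $d\in\{1,2\}$ and taking value $\ge 2$ for $d\in\{0\}\cup\{3,4,\dots\}$, and since connectedness rules out isolated vertices, the only admissible profile is two vertices of degree $3$ with all others of degree $2$ (the degree sum then forces no pendant vertices). Thus $H$ is a connected bicyclic graph with minimum degree $2$ and exactly two vertices of degree $3$. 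As $H$ equals its own $2$-core, it is a subdivision of one of the three base bicyclic multigraphs; the figure-eight base (two cycles sharing a vertex) carries a vertex of degree $4$ and is excluded, so $H$ is either a theta graph or a dumbbell graph.

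It then remains to show that $H$ cannot be a dumbbell, and that among theta graphs the $L$-spectrum recovers the unordered triple of path lengths. I would first extract finer combinatorial data: $\mathrm{tr}(L^3)=\sum_i d_i^3+3\sum_i d_i^2-6N_\triangle$ yields the number of triangles $N_\triangle$, and $\mathrm{tr}(L^4)$ similarly yields the number of quadrilaterals; combined with the spanning-tree count these already discriminate in many cases, since $\tau(\Theta)=\ell_1\ell_2+\ell_2\ell_3+\ell_3\ell_1$ for the three path edge-lengths $\ell_i$, whereas $\tau(D_{p,k,q})=pq$, and a theta graph carries three cycles against a dumbbell's two. For the general case the plan is to compute $\phi(L(\Theta_{r,s,t}))$ and $\phi(L(D_{p,k,q}))$ in closed form, using the standard recurrences for the Laplacian characteristic polynomials of paths and cycles and the effect of identifying path-endpoints with cycle vertices, and then to compare them via the weighted spanning-forest (Kelmans) interpretation of their coefficients, showing that a few low-order coefficients determine the elementary symmetric functions $e_1=\ell_1+\ell_2+\ell_3$, $e_2=\ell_1\ell_2+\ell_2\ell_3+\ell_3\ell_1$ and $e_3=\ell_1\ell_2\ell_3$, hence $\{\ell_1,\ell_2,\ell_3\}$, while simultaneously excluding every dumbbell.

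The hard part is precisely this last polynomial comparison. Knowing $n$ and $\tau$ fixes $e_1$ and $e_2$ for the theta family but not $e_3$, so at least one further independent invariant is needed, and I expect to obtain it from an explicit intermediate coefficient of $\phi(L;x)$. The delicate point will be controlling these closed-form expressions tightly enough to rule out accidental numerical coincidences — both between two non-isomorphic theta graphs and between a theta graph and a dumbbell — which in boundary cases (for instance when a short cycle $C_3$ or $C_4$ is present) may require separate bounding arguments or an appeal to the largest Laplacian eigenvalue.
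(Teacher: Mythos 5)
Your first stage is sound and coincides exactly with the paper's final lemma: from the vertex count, the edge count, connectedness and $\sum_i d_i^2$ you form $\sum_i(d_i-1)(d_i-2)=4$, conclude that a cospectral mate has exactly two vertices of degree $3$ and all others of degree $2$, and hence is again a theta graph or a dumbbell graph. The gap is in everything after that. What you offer for the decisive step --- distinguishing theta graphs from one another and from dumbbells --- is a plan rather than an argument: you say you would compute $\phi(L;x)$ in closed form and that ``a few low-order coefficients'' should recover $e_3=\ell_1\ell_2\ell_3$ and simultaneously exclude every dumbbell, but you neither identify which coefficient does this nor verify that it does. You yourself flag this as ``the hard part,'' and it is precisely where all of the work in the paper lies. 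Note also that $\mathrm{tr}(L^3)$ and $\mathrm{tr}(L^4)$ buy you essentially nothing here: a generic theta graph and a generic dumbbell both contain no triangle and no quadrilateral, so these traces collapse to degree data you already possess.

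For comparison, the paper's route through this step is the substitution $x=y+y^{-1}+2$, under which $\phi(U_n)$ becomes $(y^{2n+2}-1)/(y^{n+2}-y^{n})$ and $y^{n}(y^2-1)^3\phi(L(G))$, after adding a universal correction term, becomes an explicit polynomial $f_\Theta(r,s,t;y)$ or $f_D(p,k,q;y)$. Cospectrality forces these polynomials to be equal, and comparing their monomials of smallest exponent ($2(-1)^{s+t}y^{2+s+t}$ versus $y^{2t+4}$ for theta graphs, $2(-1)^{q}y^{q}$ versus $y^{2k+4}$ for dumbbells) yields $s+t=s'+t'$ or $t=t'$, which combined with the vertex and spanning-tree counts pins down $(r,s,t)$. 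The theta-versus-dumbbell exclusion is genuinely delicate: the lowest-term comparison leaves a residual case that is eliminated only by evaluating $\phi(L;4)$, and the sub-case $k=t=0$ requires stripping off the common low-order terms and comparing again. Your proposal anticipates none of these specifics and gives no evidence that the alternative mechanism you name (weighted spanning-forest coefficients) supplies the missing third invariant or handles the dumbbell exclusion, so as written the proof is incomplete at its core.
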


\section{Preliminaries}

\begin{lem}\emph{\cite{kn:vanDam03,kn:Oliveira02}}\label{spectrum}
Let $G$ be a graph. The following can be determined by its $L$-spectrum:
\begin{itemize}
\item[\rm (a)] the number of vertices of $G$;
\item[\rm (b)] the number of edges of $G$;
\item[\rm (c)] the number of components of $G$;
\item[\rm (d)] the number of spanning trees of $G$;
\item[\rm (e)] the sum of squares of vertex degrees of $G$.
\end{itemize}
\end{lem}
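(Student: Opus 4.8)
The plan is to read each invariant off an appropriate symmetric function of the Laplacian eigenvalues, using only elementary properties of $L(G)$. Write the $L$-spectrum as $\mu_1\ge\mu_2\ge\cdots\ge\mu_n$, where $L(G)$ is a real symmetric matrix so that this multiset is exactly the given spectrum. Part (a) is then immediate: the number of vertices is the number of eigenvalues counted with multiplicity, that is, $n$. For (b) I would use the first power sum. Since $L(G)=D(G)-A(G)$ and $A(G)$ has zero diagonal, the trace of $L(G)$ equals $\sum_{i=1}^n d_i=2|E(G)|$; as $\operatorname{tr}(L(G))=\sum_{i=1}^n\mu_i$, the number of edges is recovered as $\tfrac12\sum_{i=1}^n\mu_i$.

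For (c) I would invoke two standard structural facts about the Laplacian: that $L(G)$ is positive semidefinite, and that the indicator vector of each connected component lies in its kernel, these vectors forming a basis of that kernel. Consequently the multiplicity of the eigenvalue $0$ in the $L$-spectrum equals the number of connected components of $G$. For (d) I would apply the Matrix--Tree Theorem: for a connected graph the number of spanning trees equals $\tfrac1n\prod_{i=1}^{n-1}\mu_i$, the product of the nonzero eigenvalues divided by $n$. Combined with (a) and (c) this quantity is determined by the spectrum, and it is $0$ precisely when $G$ is disconnected, so the general case is covered as well.

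Finally, for (e) I would compute the second power sum. Expanding $L(G)^2=D(G)^2-D(G)A(G)-A(G)D(G)+A(G)^2$ and taking traces, the two cross terms contribute nothing because $A(G)$ has zero diagonal, while $\operatorname{tr}(D(G)^2)=\sum_{i=1}^n d_i^2$ and $\operatorname{tr}(A(G)^2)=\sum_{i=1}^n d_i=2|E(G)|$. Since $\operatorname{tr}(L(G)^2)=\sum_{i=1}^n\mu_i^2$, this gives $\sum_{i=1}^n d_i^2=\sum_{i=1}^n\mu_i^2-\sum_{i=1}^n\mu_i$, so the sum of squares of the vertex degrees is determined by the spectrum via its first two power sums.

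The only genuinely nonelementary inputs are the kernel--component correspondence behind (c) and the Matrix--Tree Theorem behind (d); every other part is pure trace bookkeeping. Accordingly, I expect no substantive obstacle here: the main point is simply to have these two classical structural facts available, after which all five items follow from the power sums $\sum_i\mu_i$ and $\sum_i\mu_i^2$ together with the eigenvalue count and the multiplicity of $0$.
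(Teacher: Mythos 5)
Your proof is correct and is essentially the standard argument given in the cited sources (van Dam--Haemers and Oliveira et al.), which the paper itself does not reprove: (a), (b) and (e) from the eigenvalue count and the first two power sums $\operatorname{tr}(L)$ and $\operatorname{tr}(L^2)$, (c) from the kernel--component correspondence, and (d) from the Matrix--Tree Theorem. Nothing further is needed.
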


Let $U_n$ be the matrix of order $n$ obtained from $L(P_{n+2})$ by deleting the rows and columns corresponding to the two end vertices of $P_{n+2}$.

\begin{lem}\label{LLpolynomial1}\emph{\cite{kn:Guo08}}
Set $\phi(L(P_0))=0$, $\phi(U_0)=1$. Then
\begin{itemize}
\item[\rm (a)] $\phi(L(P_{n+1}))=(x-2)\phi(L(P_n))-\phi(L(P_{n-1}))$;
\item[\rm (b)] $\phi(L(P_n))=x\phi(U_{n-1})$.
\end{itemize}
\end{lem}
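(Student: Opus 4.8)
The plan is to reduce both parts to cofactor expansions of tridiagonal determinants, treating the two degree-one endpoints of $P_n$ as the only genuine subtlety; I would prove (b) first and then read off (a). I would begin by recording the clean recurrence for $U_n$. Since deleting the two pendant vertices of $P_{n+2}$ leaves $n$ interior vertices, each of degree $2$, the matrix $U_n$ is tridiagonal with every diagonal entry equal to $2$ and every off-diagonal entry equal to $-1$. Expanding $\det(xI_n-U_n)$ along its last row then gives at once
\[
\phi(U_n)=(x-2)\phi(U_{n-1})-\phi(U_{n-2}),
\]
with $\phi(U_0)=1$ and $\phi(U_1)=x-2$ (and the forced convention $\phi(U_{-1})=0$).

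The reason one cannot run the same expansion directly on $L(P_n)$ is that its two corner diagonal entries equal $1$ rather than $2$: a naive expansion of $\phi(L(P_n))$ along an end row produces a factor $(x-1)$ together with a principal submatrix that is \emph{not} a path Laplacian, since its far end has now become an interior vertex with diagonal entry $2$. To absorb this boundary mismatch I would introduce the auxiliary $n\times n$ tridiagonal matrix $T_n$ with diagonal $(1,2,2,\dots,2)$ and off-diagonals $-1$, i.e.\ a path Laplacian with exactly one pendant end. Expanding along the last row yields both $\phi(T_n)=(x-2)\phi(T_{n-1})-\phi(T_{n-2})$ and $\phi(L(P_n))=(x-1)\phi(T_{n-1})-\phi(T_{n-2})$, the latter precisely because deleting the degree-one endpoint of $P_n$ leaves $T_{n-1}$. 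Since $\phi(T_n)$ obeys the same recurrence as $\phi(U_n)$ and $\phi(T_0)=\phi(U_0)=1$, $\phi(T_1)-\phi(U_1)=1=\phi(U_0)$, a one-line induction gives $\phi(T_n)=\phi(U_n)+\phi(U_{n-1})$.

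Substituting this relation into $\phi(L(P_n))=(x-1)\phi(T_{n-1})-\phi(T_{n-2})$ and collapsing the outcome with the $U$-recurrence $\phi(U_{n-1})=(x-2)\phi(U_{n-2})-\phi(U_{n-3})$ should leave exactly $\phi(L(P_n))=x\,\phi(U_{n-1})$, which is part (b); the stipulated conventions $\phi(L(P_0))=0$ and $\phi(U_0)=1$ are what make the small cases agree. (The emerging factor $x$ is of course the zero Laplacian eigenvalue carried by the all-ones vector, which gives an a priori reason to expect $x\mid\phi(L(P_n))$; the auxiliary computation is what pins the quotient down as $\phi(U_{n-1})$.) Part (a) is then immediate: applying (b) three times and the recurrence for $\phi(U_n)$,
\[
\phi(L(P_{n+1}))=x\,\phi(U_n)=x\big[(x-2)\phi(U_{n-1})-\phi(U_{n-2})\big]=(x-2)\phi(L(P_n))-\phi(L(P_{n-1})).
\]

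I expect the only real obstacle to be careful bookkeeping of the endpoint degrees, in particular verifying that the submatrix produced after deleting an endpoint of $P_n$ is the one-pendant matrix $T_{n-1}$ and not $L(P_{n-1})$, and keeping the cofactor signs straight in the tridiagonal expansions. Once that is pinned down, everything reduces to the shared second-order recurrence, and the remaining manipulations are routine polynomial algebra.
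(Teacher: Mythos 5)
Your proof is correct. Note that the paper itself gives no argument for this lemma: it is imported verbatim from the cited reference \cite{kn:Guo08}, so there is no in-paper proof to compare against. Your self-contained derivation is sound: the tridiagonal recurrences for $\phi(U_n)$ and $\phi(T_n)$ are the standard ones, the identity $\phi(T_n)=\phi(U_n)+\phi(U_{n-1})$ checks at $n=0,1$ and propagates by the shared recurrence, and substituting it into $\phi(L(P_n))=(x-1)\phi(T_{n-1})-\phi(T_{n-2})$ gives
\[
\phi(L(P_n))=x\,\phi(U_{n-1})-\phi(U_{n-1})+\bigl[(x-2)\phi(U_{n-2})-\phi(U_{n-3})\bigr]=x\,\phi(U_{n-1}),
\]
as claimed; part (a) then follows by multiplying the $U$-recurrence through by $x$. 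The base cases match the stipulated conventions $\phi(L(P_0))=0$, $\phi(U_0)=1$ (e.g.\ $\phi(L(P_2))=x(x-2)=x\,\phi(U_1)$), and your identification of the submatrix left after deleting an endpoint as $T_{n-1}$ rather than $L(P_{n-1})$ is exactly the point that needs care.
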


Combining Lemma \ref{LLpolynomial1} and $\phi(L(P_1);x)=x$, we obtain the following formulas.

\begin{prop}\label{Leigenvalue4}
\begin{itemize}
\item[\rm (a)] $\phi(L(P_n);4)=4n;$  \quad \emph{\rm{(b)}}\, $\phi(U_{n};4)=n+1;$
\item[\rm (c)]$\phi(U_{n};2)=\frac{1}{2}\phi(L(P_{n+1});2)=\left\{\begin{array}{ll}
                                    0,& \text{if $n$ is odd;} \\
                                    (-1)^{n/2}, & \text{if $n$ is even.}
                                       \end{array}\right.$
\end{itemize}
\end{prop}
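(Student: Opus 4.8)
The plan is to substitute the specific values $x=4$ and $x=2$ into the two recurrences of Lemma \ref{LLpolynomial1} and then solve the resulting numerical recurrences by induction, using the initial data $\phi(L(P_0))=0$, $\phi(U_0)=1$, and $\phi(L(P_1);x)=x$.

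First I would prove (a). Writing $a_n=\phi(L(P_n);4)$ and evaluating part (a) of Lemma \ref{LLpolynomial1} at $x=4$ gives $a_{n+1}=2a_n-a_{n-1}$, equivalently $a_{n+1}-a_n=a_n-a_{n-1}$, so consecutive differences are constant. With $a_0=\phi(L(P_0);4)=0$ and $a_1=\phi(L(P_1);4)=4$, the common difference is $4$, and a routine induction yields $a_n=4n$. Part (b) then follows at once: evaluating part (b) of Lemma \ref{LLpolynomial1} at $x=4$ gives $\phi(L(P_n);4)=4\,\phi(U_{n-1};4)$, so $4n=4\,\phi(U_{n-1};4)$, whence $\phi(U_{n-1};4)=n$, i.e. $\phi(U_n;4)=n+1$.

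For part (c), the first equality is immediate from part (b) of Lemma \ref{LLpolynomial1} at $x=2$, namely $\phi(L(P_{n+1});2)=2\,\phi(U_n;2)$. To evaluate $\phi(L(P_{n+1});2)$, set $b_n=\phi(L(P_n);2)$ and substitute $x=2$ into part (a) of Lemma \ref{LLpolynomial1}; since $x-2=0$ the middle term vanishes and one obtains $b_{n+1}=-b_{n-1}$, hence $b_{n+4}=b_n$, so the sequence has period four. With $b_0=0$ and $b_1=2$ this gives $b_n=0$ whenever $n$ is even, while for odd $n$ the values cycle through $2,-2,2,-2,\dots$. Consequently $\phi(U_n;2)=\frac{1}{2}b_{n+1}=0$ for odd $n$ (then $n+1$ is even), and for even $n$ a short induction on $n/2$ gives $b_{n+1}=2(-1)^{n/2}$, so $\phi(U_n;2)=(-1)^{n/2}$, as claimed.

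The computations are entirely routine, being nothing more than evaluating a three-term recurrence at two points. The only place requiring a little care is the parity bookkeeping in part (c): one must keep straight that $\phi(U_n;2)$ is governed by $\phi(L(P_{n+1});2)$, so that the index shift sends an even $n$ to an odd $n+1$, which is where the nonzero values $(-1)^{n/2}$ come from.
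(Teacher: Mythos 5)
Your proof is correct and follows exactly the route the paper intends: the paper simply states that Proposition \ref{Leigenvalue4} is obtained by ``combining Lemma \ref{LLpolynomial1} and $\phi(L(P_1);x)=x$,'' and your substitution of $x=4$ and $x=2$ into the recurrences, followed by routine induction, is precisely that argument spelled out. The parity bookkeeping in part (c) is handled correctly, including the index shift from $U_n$ to $P_{n+1}$.
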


For a vertex $v$ of $G$, let $L_v(G)$ denote the principal sub-matrix of $L(G)$ formed by deleting the row and column corresponding to $v$. Similarly, if $H$ is a subgraph of $G$, let $L_H(G)$ denote the principal sub-matrix of $L(G)$
formed by deleting the rows and columns corresponding to all vertices of $V(H)$.

\begin{lem}\label{LCyclePol}\emph{\cite{kn:Guo13}}
Let $u$ be a vertex of $G$, $N(u)$ the set of vertices of $G$ adjacent to $u$, and $C(u)$ the set of cycles of $G$ containing $u$. Then
\begin{eqnarray*}
\phi(L(G);x)=(x-d_G(u))\phi(L_u(G);x)-\sum_{v\in N(u)}\phi(L_{uv}(G);x)-2\sum_{Z\in C(u)}(-1)^{|V(Z)|}\phi(L_Z(G);x).
\end{eqnarray*}
\end{lem}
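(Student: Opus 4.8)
The plan is to prove this identity directly from the Leibniz (permutation) expansion of the determinant, grouping the permutations according to the permutation-cycle that contains $u$; this is the Laplacian analogue of Schwenk's vertex-deletion formula for the adjacency matrix. Write $M=xI_n-L(G)$, so that $\phi(L(G);x)=\det M$. Since $L(G)=D(G)-A(G)$, the entries of $M$ are $M_{ii}=x-d_G(v_i)$ on the diagonal, $M_{ij}=1$ whenever $v_i$ and $v_j$ are adjacent (because $-L_{ij}=A_{ij}=1$), and $M_{ij}=0$ otherwise. In particular every nonzero off-diagonal entry equals $+1$, so products of entries along a chain of mutually adjacent vertices never introduce an extra sign: all signs in the expansion will come solely from the signatures of permutations. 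This observation is what makes the Laplacian case behave exactly like the combinatorial "basic figure" expansion.

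Next I would expand $\det M=\sum_{\sigma}\mathrm{sgn}(\sigma)\prod_i M_{i\sigma(i)}$ and, for each $\sigma$ contributing a nonzero term, isolate the cycle $\gamma$ of its cycle decomposition containing $u$. If $\gamma$ acts on a vertex subset $S$ with $u\in S$, then $\sigma=\gamma\cdot\tau$ for some permutation $\tau$ of $V(G)\setminus S$; both the signature and the entry product factor accordingly, and summing over all admissible $\tau$ reassembles the determinant of the principal submatrix of $M$ supported on $V(G)\setminus S$, which equals $\phi(L_S(G);x)$, where $L_S(G)$ denotes $L(G)$ with the rows and columns of $S$ deleted. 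It then remains to classify the admissible cycles $\gamma$ through $u$. For $S=\{u\}$, $\gamma$ is the identity with $\mathrm{sgn}(\gamma)=1$ and entry $M_{uu}=x-d_G(u)$, giving $(x-d_G(u))\phi(L_u(G);x)$. For $S=\{u,v\}$, the cycle $\gamma=(u\,v)$ forces $v\in N(u)$, has signature $-1$, and entry product $M_{uv}M_{vu}=1$, contributing $-\sum_{v\in N(u)}\phi(L_{uv}(G);x)$.

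The remaining case $|S|=k\ge 3$ is where the substance lies. A $k$-cycle $\gamma=(u\,i_2\,\cdots\,i_k)$ with all entries nonzero forces consecutive vertices (and $i_k,u$) to be adjacent in $G$, so $S=V(Z)$ for a cycle $Z\in C(u)$; conversely each such undirected cycle $Z$ arises from exactly two permutation-cycles, namely its two orientations. Each orientation has signature $(-1)^{k-1}$ and entry product $1$, so together they contribute $2(-1)^{|V(Z)|-1}\phi(L_Z(G);x)=-2(-1)^{|V(Z)|}\phi(L_Z(G);x)$, and summing the three cases yields the stated identity. The step I would double-check most carefully is precisely this sign-and-multiplicity bookkeeping: one must use that every nonzero off-diagonal entry of $M$ is $+1$ (so no sign leaks in from the entries themselves), that a graph cycle of length $k\ge 3$ acquires the factor $2$ from its two distinct orientations while a $2$-cycle $(u\,v)$ is its own reverse and so picks up no factor of $2$ (which is why neighbours are summed without doubling), and that the signature $(-1)^{k-1}$ of a $k$-cycle converts into the factor $-(-1)^{|V(Z)|}$ appearing in the conclusion.
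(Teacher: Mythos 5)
The paper does not prove this lemma at all: it is quoted verbatim from the cited reference \cite{kn:Guo13}, so there is no internal proof to compare against. Your argument is nevertheless a correct and self-contained derivation. The key points all check out: for $M=xI_n-L(G)$ the off-diagonal entries are $M_{ij}=A_{ij}\in\{0,1\}$, so no signs come from the entries; grouping the Leibniz sum by the permutation-cycle $\gamma$ through $u$ and summing out the complementary permutation $\tau$ does reconstitute $\det\bigl(xI-L_{S}(G)\bigr)=\phi(L_S(G);x)$ on the complement of the support $S$ of $\gamma$; and the three cases give $(x-d_G(u))\phi(L_u(G);x)$ for the fixed point, $-\phi(L_{uv}(G);x)$ for each transposition $(u\,v)$ with $v\in N(u)$ (its own reverse, hence no doubling), and $2(-1)^{k-1}\phi(L_Z(G);x)=-2(-1)^{|V(Z)|}\phi(L_Z(G);x)$ for each graph cycle $Z$ of length $k\ge3$ through $u$, counted once per orientation. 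This is the Laplacian analogue of Schwenk's vertex version of the coefficient theorem, and it is arguably more transparent than the route in the cited source (which derives such identities by determinant manipulations and induction); the only thing worth making explicit if you write this up formally is the factorization $\mathrm{sgn}(\gamma\tau)=\mathrm{sgn}(\gamma)\mathrm{sgn}(\tau)$ together with the remark that permutations $\tau$ with a vanishing entry product contribute $0$ to both sides, so summing over all $\tau$ is legitimate.
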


The following formulas follow from Lemma \ref{LCyclePol}.
\begin{prop}\label{LpolDgraph}
Let $L_{q,k}=L_{C_p}(D_{p,k,q})$. Then
\begin{eqnarray*}
\phi(L_{q,k})&=&\big((x-3)\phi(U_{q-1})-2\phi(U_{q-2})-2(-1)^q\big)\phi(U_k)-\phi(U_{q-1})\phi(U_{k-1});\\
\phi(L(D_{p,k,q}))&=&\big((x-3)\phi(U_{p-1})-2\phi(U_{p-2})-2(-1)^p\big)\phi(L_{q,k})-\phi(U_{p-1})\phi(L_{q,k-1}).
\end{eqnarray*}
\end{prop}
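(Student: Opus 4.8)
The plan is to obtain each identity from a single application of Lemma~\ref{LCyclePol}, expanding the determinant at the cut vertex where the relevant cycle is attached, and then recognising every deleted-vertex minor as a product of a ``path block'' (a $U$-matrix) and a ``remaining block''. Throughout I would use the elementary fact that deleting one vertex (resp.\ two adjacent vertices) from $C_m$ leaves exactly the tridiagonal block $U_{m-1}$ (resp.\ $U_{m-2}$), since the endpoints of the broken cycle retain diagonal entry $2$.

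For the second identity I would apply Lemma~\ref{LCyclePol} to the genuine Laplacian $L(D_{p,k,q})$ at the vertex $u_0$ of $C_p$ that carries the path $P_{k+2}$. This vertex has degree $3$; its neighbours are the two cycle-neighbours on $C_p$ together with the first internal path vertex; and the only cycle through it is $C_p$ itself, because the path is a bridge joining the two cycles. Deleting $u_0$ disconnects $C_p\setminus u_0$ from the rest of the graph, so every minor factorises: deleting $u_0$ gives $\phi(U_{p-1})\phi(L_{q,k})$; deleting $u_0$ with either $C_p$-neighbour gives $\phi(U_{p-2})\phi(L_{q,k})$; deleting $u_0$ with the first path vertex gives $\phi(U_{p-1})\phi(L_{q,k-1})$; and the cycle term deletes all of $C_p$, leaving $\phi(L_{q,k})$. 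Collecting the terms and reading off the coefficient of $\phi(L_{q,k})$ as $(x-3)\phi(U_{p-1})-2\phi(U_{p-2})-2(-1)^p$ yields the stated formula at once.

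For the first identity the same bookkeeping works, now expanding at the vertex $w=u_{k+1}$ of $C_q$ that carries the path (again degree $3$, unique incident cycle $C_q$): deleting $w$ splits off $U_{q-1}$ and leaves the path block $U_k$; deleting $w$ with a $C_q$-neighbour leaves $U_{q-2}$ and $U_k$; deleting $w$ with $u_k$ leaves $U_{q-1}$ and $U_{k-1}$; and the cycle term leaves $U_k$. Assembling these reproduces $\big((x-3)\phi(U_{q-1})-2\phi(U_{q-2})-2(-1)^q\big)\phi(U_k)-\phi(U_{q-1})\phi(U_{k-1})$. The subtlety here is that $L_{q,k}=L_{C_p}(D_{p,k,q})$ is not a bona fide graph Laplacian: the free path end $u_1$ inherits diagonal entry $2$ from $D_{p,k,q}$ rather than the degree $1$ it would carry in the induced subgraph. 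I would justify the expansion either by observing that Lemma~\ref{LCyclePol} is a purely local cofactor-plus-cycle identity at $w$, whose validity depends only on the entries in the row and column of $w$ and on the cycles through $w$ (all intact in the submatrix, with the anomalous diagonal at $u_1$ merely carried along inside each minor); or, to sidestep the point entirely, by induction on $k$: expanding at the free end $u_1$ gives the three-term recurrence $\phi(L_{q,k})=(x-2)\phi(L_{q,k-1})-\phi(L_{q,k-2})$, which is the very recurrence satisfied by $\phi(U_k)$ and $\phi(U_{k-1})$ by Lemma~\ref{LLpolynomial1}, so it suffices to check the base cases $k=0,1$, and $k=0$ reduces to computing $\phi(L(C_q))-\phi(U_{q-1})$ from Lemma~\ref{LCyclePol} applied to the honest cycle $C_q$.

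I expect the main obstacle to be precisely this handling of the bumped diagonal at $u_1$ (equivalently, a clean statement of why the cycle-expansion may be applied to the principal submatrix $L_{q,k}$ at all), together with the care needed for the degenerate conventions $\phi(U_{-1})=0$ and $\phi(U_0)=1$ that make the formulas correct at $k=0$ and for the smallest cycles. Once those points are pinned down, both identities follow immediately from the factorisation of the minors produced by a single expansion.
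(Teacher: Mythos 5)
Your proof is correct and takes essentially the same route as the paper, which offers no details beyond the remark that the formulas ``follow from Lemma~\ref{LCyclePol}'', i.e.\ precisely the expansion at the two attachment vertices with every minor factoring into a $U$-block times the remaining block, as you describe. Your additional care about why the cycle expansion may be applied to the principal submatrix $L_{q,k}$ (whose free end carries the anomalous diagonal entry $2$) addresses a point the paper silently glosses over, and either of your two justifications --- the locality of the determinant identity at $w$, or the three-term recurrence in $k$ with the base case $\phi(L_{q,0})=\phi(L(C_q))-\phi(U_{q-1})$ --- is adequate.
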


\begin{prop}\label{LpolTheta}
Let $\theta_{r,s,t}=L_{u}(\Theta_{r,s,t})$. Then
\begin{eqnarray*}
   \phi(\theta_{r,s,t})&=&(x-3)\phi(U_r)\phi(U_s)\phi(U_t)-\phi(U_{r-1})\phi(U_s)\phi(U_t)-\phi(U_r)\phi(U_{s-1})\phi(U_t) \\
   & &-\phi(U_r)\phi(U_s)\phi(U_{t-1}); \\
   \phi(L(\Theta_{r,s,t})) &=&(x-3)\phi(\theta_{r,s,t})-\phi(\theta_{r-1,s,t})-\phi(\theta_{r,s-1,t})-\phi(\theta_{r,s,t-1})-2(-1)^{s+t}\phi(U_r) \\
   & &-2(-1)^{r+t}\phi(U_s)-2(-1)^{r+s}\phi(U_t).
\end{eqnarray*}
\end{prop}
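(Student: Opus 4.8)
The plan is to derive both identities by Laplace (cofactor) expansion, invoking Lemma~\ref{LCyclePol} for the second formula and its cycle-free case for the first. Throughout, let $u$ and $v$ be the two vertices of degree $3$ in $\Theta_{r,s,t}$, and label the internal vertices of the three paths $a_1,\dots,a_r$, $b_1,\dots,b_s$, $c_1,\dots,c_t$, with $a_1,b_1,c_1$ adjacent to $u$ and $a_r,b_s,c_t$ adjacent to $v$. The structural observation driving everything is that $\theta_{r,s,t}=L_u(\Theta_{r,s,t})$ is the matrix of a ``spider'': passing to the principal submatrix leaves every surviving diagonal entry equal to its degree in $\Theta_{r,s,t}$, so $v$ keeps diagonal entry $3$, each former neighbour $a_1,b_1,c_1$ of $u$ keeps diagonal entry $2$, and each of the three arms (read off without $v$) is exactly the tridiagonal matrix $U_r$, $U_s$, or $U_t$. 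Note $\theta_{r,s,t}$ is \emph{not} a graph Laplacian, since those former neighbours carry diagonal $2$ yet now have a single incident edge.

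For the first identity I would expand $\det(xI-\theta_{r,s,t})$ along $v$. As the spider has no cycle through $v$, the cycle-free case of Lemma~\ref{LCyclePol} (equivalently, the cofactor expansion along $v$, with the diagonal entry of $v$ frozen at $3$) gives
\[
\phi(\theta_{r,s,t})=(x-3)\,\phi\big((\theta_{r,s,t})_v\big)-\sum_{w\in\{a_r,b_s,c_t\}}\phi\big((\theta_{r,s,t})_{vw}\big).
\]
Deleting $v$ disconnects the spider into its three arms, contributing $\phi(U_r)\phi(U_s)\phi(U_t)$; deleting $v$ and $a_r$ shortens the first arm to $U_{r-1}$ while the others stay intact, contributing $\phi(U_{r-1})\phi(U_s)\phi(U_t)$, and symmetrically for $b_s$ and $c_t$. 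Collecting the four terms reproduces the stated formula for $\phi(\theta_{r,s,t})$.

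For the second identity I would apply Lemma~\ref{LCyclePol} directly to $L(\Theta_{r,s,t})$ at $u$. The leading term is $(x-3)\phi(L_u(\Theta_{r,s,t}))=(x-3)\phi(\theta_{r,s,t})$. In the neighbour sum, deleting $u$ and $a_1$ leaves a spider whose first arm has lost one vertex, which one checks is exactly $\theta_{r-1,s,t}$ (and symmetrically $\theta_{r,s-1,t}$, $\theta_{r,s,t-1}$ for $b_1,c_1$). Finally $C(u)$ consists of precisely three cycles, one per pair of arms: arms $1,2$ form a cycle on $r+s+2$ vertices whose deletion leaves the third arm $U_t$, and likewise arms $1,3$ and $2,3$ leave $U_s$ and $U_r$ on $r+t+2$ and $s+t+2$ vertices. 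Since $(-1)^{r+s+2}=(-1)^{r+s}$, and so on, the term $-2\sum_{Z\in C(u)}(-1)^{|V(Z)|}\phi(L_Z(\Theta_{r,s,t}))$ equals $-2(-1)^{s+t}\phi(U_r)-2(-1)^{r+t}\phi(U_s)-2(-1)^{r+s}\phi(U_t)$, and assembling the three contributions yields the claimed expression.

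The calculations are routine once the structure is pinned down, so the only real work is bookkeeping: matching each deleted-vertex submatrix to the correct $U$- or $\theta$-polynomial and getting the three cycle lengths and signs right. The point that genuinely needs attention is the degenerate case $t=0$, where $u$ and $v$ are joined by a direct edge. Here I would lean on the conventions $\phi(U_0)=1$ and $\phi(U_{-1})=0$ (the latter forced by $\phi(L(P_0))=x\,\phi(U_{-1})=0$ via Lemma~\ref{LLpolynomial1}), under which the surplus term $\phi(U_r)\phi(U_s)\phi(U_{t-1})$ in the first formula vanishes and $v$'s two surviving neighbours reproduce the expansion exactly; I would then verify directly that the second formula remains valid as well, so that both identities hold uniformly over the admissible range $r\ge s\ge t\ge 0$, $(s,t)\neq(0,0)$.
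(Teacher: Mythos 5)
Your proposal is correct and follows exactly the route the paper intends: the paper offers no written proof beyond the remark that the formulas "follow from Lemma~\ref{LCyclePol}", and your expansion at $v$ for $\phi(\theta_{r,s,t})$ and application of Lemma~\ref{LCyclePol} at $u$ for $\phi(L(\Theta_{r,s,t}))$, with the cycle lengths $r+s+2$, $r+t+2$, $s+t+2$ giving the stated signs, is precisely that computation. Your attention to the $t=0$ degenerate case (conventions $\phi(U_0)=1$, $\phi(U_{-1})=0$) is a welcome extra that the paper glosses over.
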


Substituting (b) of Proposition \ref{Leigenvalue4} into Propositions \ref{LpolDgraph} and \ref{LpolTheta}, we obtain the following results.
\begin{prop}\label{LDTheta4}
\begin{itemize}
\item[\rm (a)] $\phi(L(D_{p,k,q});4)=4pqk-2p(2k+1)(1-(-1)^q)-2q(2k+1)(1-(-1)^p)+4(k+1)(1-(-1)^p)(1-(-1)^q)$;
\item[\rm (b)]$\phi(L(\Theta_{r,s,t});4)=4rst-2r(1+(-1)^{s+t})-2s(1+(-1)^{r+t})-2t(1+(-1)^{r+s})-2(1+(-1)^{s+t}+(-1)^{r+t}+(-1)^{r+s})$.
\end{itemize}
\end{prop}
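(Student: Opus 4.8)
The plan is to prove both identities by direct substitution of $x=4$ into the closed-form expressions supplied by Propositions \ref{LpolDgraph} and \ref{LpolTheta}, since those already express $\phi(L(D_{p,k,q}))$ and $\phi(L(\Theta_{r,s,t}))$ entirely in terms of the polynomials $\phi(U_n)$ (through the auxiliary quantities $\phi(L_{q,k})$ and $\phi(\theta_{r,s,t})$, which are themselves built from the $\phi(U_n)$). No induction is needed. The two facts that make the evaluation tractable are $\phi(U_n;4)=n+1$ from Proposition \ref{Leigenvalue4}(b) and the observation that $(x-3)$ becomes $1$ at $x=4$, so every factor $(x-3)$ simply disappears. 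After substitution, each expression becomes a polynomial in $p,q,k$ (respectively $r,s,t$) whose coefficients depend only on the parities $(-1)^p,(-1)^q$ (respectively $(-1)^r,(-1)^s,(-1)^t$), and the task reduces to collecting like terms.

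For part (a), I would first evaluate the inner polynomial. Setting $x=4$ in the first formula of Proposition \ref{LpolDgraph} and using $\phi(U_{q-1};4)=q$, $\phi(U_{q-2};4)=q-1$, $\phi(U_k;4)=k+1$, $\phi(U_{k-1};4)=k$ gives, after simplification,
\[
\phi(L_{q,k};4)=2\bigl(1-(-1)^q\bigr)(k+1)-q(2k+1),
\]
and replacing $k$ by $k-1$ yields $\phi(L_{q,k-1};4)$. Substituting these, together with $\phi(U_{p-1};4)=p$ and $\phi(U_{p-2};4)=p-1$, into the second formula, the bracket $(x-3)\phi(U_{p-1})-2\phi(U_{p-2})-2(-1)^p$ evaluates to $2(1-(-1)^p)-p$, so
\[
\phi(L(D_{p,k,q});4)=\bigl(2(1-(-1)^p)-p\bigr)\,\phi(L_{q,k};4)-p\,\phi(L_{q,k-1};4).
\]
Expanding and grouping, the purely polynomial parts assemble to $4pqk$ and the parity-weighted parts to the three remaining summands, giving the stated formula. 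As a quick sanity check, the top-degree monomial is $(-p)(-2qk)+(-p)(-2qk)=4pqk$, matching the claim.

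For part (b) the same recipe applies. Setting $x=4$ in the first formula of Proposition \ref{LpolTheta} collapses it to
\[
\phi(\theta_{r,s,t};4)=(r+1)(s+1)+(r+1)(t+1)+(s+1)(t+1)-2(r+1)(s+1)(t+1).
\]
The three decremented values $\phi(\theta_{r-1,s,t};4)$, $\phi(\theta_{r,s-1,t};4)$, $\phi(\theta_{r,s,t-1};4)$ are read off by lowering the corresponding argument by one, and their differences from $\phi(\theta_{r,s,t};4)$ telescope neatly; for instance $\phi(\theta_{r,s,t};4)-\phi(\theta_{r-1,s,t};4)=(s+1)+(t+1)-2(s+1)(t+1)$. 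Combining them according to the second formula of Proposition \ref{LpolTheta}, the polynomial part simplifies to $4rst-2r-2s-2t-2$, and adding the parity contributions $-2(-1)^{s+t}(r+1)-2(-1)^{r+t}(s+1)-2(-1)^{r+s}(t+1)$ (where $(x-3)=1$ at $x=4$ is used once more on the leading $\phi(\theta_{r,s,t})$ term) reproduces the claimed expression after collecting the linear factors against the matching cross-parities.

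I expect the only real difficulty to be the algebraic bookkeeping rather than any conceptual hurdle, in particular correctly pairing the cross-parities $(-1)^{s+t}$, $(-1)^{r+t}$, $(-1)^{r+s}$ with the right linear factors $(r+1)$, $(s+1)$, $(t+1)$ in the theta computation. One should also confirm that the boundary conventions $\phi(L(P_0))=0$ and $\phi(U_0)=1$ keep the substitution valid in the degenerate cases (for example $t=0$ or $k=0$), so that a single computation covers the whole admissible parameter range. Beyond that the argument is entirely mechanical.
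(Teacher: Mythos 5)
Your proposal is correct and is exactly the paper's argument: the paper derives Proposition \ref{LDTheta4} by substituting $\phi(U_n;4)=n+1$ from Proposition \ref{Leigenvalue4}(b) into the recursive formulas of Propositions \ref{LpolDgraph} and \ref{LpolTheta} and simplifying, which is precisely the computation you carry out (and your intermediate values, e.g. $\phi(L_{q,k};4)=2(1-(-1)^q)(k+1)-q(2k+1)$ and the polynomial part $4rst-2r-2s-2t-2$, check out). Your remark about the boundary conventions is well taken but harmless, since $\phi(U_{-1};4)=0$ is consistent with the formula $\phi(U_n;4)=n+1$ at $n=-1$.
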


In the following we compute the characteristic polynomials of $L(D_{p,k,q})$ and $L(\Theta_{r,s,t})$ respectively. It is known \cite{kn:Liu14} that
\begin{eqnarray}
          \phi(U_r)&=&\frac{y^{2r+2}-1}{y^{r+2}-y^{r}},\label{LaplacianPolHn}
        \end{eqnarray}
where $y$ satisfies the characteristic equation $y^2-(x-2)y+1=0$ with $x\neq4$.
Then plugging (\ref{LaplacianPolHn}) into Proposition \ref{LpolDgraph}, and with the help of Maple, we obtain
\begin{eqnarray}\label{LDumbwhole}
 y^{n}(y^2-1)^3\phi(L(D_{p,k,q}))+f(y)&=&f_D(p,k,q;y),
\end{eqnarray}
where $n=p+k+q$, $f(y)=1-2y-3y^2+4y^3+4y^4-4y^{2n+2}-4y^{2n+3}+3y^{2n+4}+2y^{2n+5}-y^{2n+6}$, and
\begin{eqnarray*}
\begin{array}{llll}
f_D(p,k,q;y)=
&y^{6+2q+2k}                          &+2(-1)^{1+p}y^{p+2q+2k+3}            &+4(-1)^{1+p}y^{p+2q+2k+2}\\
&+6(-1)^py^{p+4+2q+2k}                &+2(-1)^{1+p}y^{p+6+2q+2k}            &+2(-1)^py^{p+5+2q+2k}   \\
&+2(-1)^py^{p+2q+1}                   &+2(-1)^py^{p+2k+3}                   &+2(-1)^{1+p}y^{p+2+2q}    \\
&+2(-1)^{1+p}y^{p+6+2k}               &+2(-1)^{1+p}y^{p+5+2k}               &+2(-1)^{1+p}y^{1+p}    \\
&+6(-1)^{1+p}y^{p+2}                  &+2(-1)^{1+p}y^{p+3+2q}               &+4(-1)^py^{p+4}      \\
&+2(-1)^py^{p+3}                      &+2(-1)^py^{p+2q}                     &-2y^{2p+3+2k}    \\
&-2y^{2p+1+2q}                        &-3y^{2p+4+2k}                        &+y^{2p+6+2k}     \\
&-y^{2p+2+2q}                         &-2y^{2q+2k+3}                        &-3y^{2q+2k+4}  \\
&+3y^{2p+2}                           &+2y^{5+2k}                           &+y^{6+2k}    \\
&+2y^{3+2q}                           &-y^{2p+2q}                           &+2y^{2p+3}   \\                  &\textcolor[rgb]{0.00,0.00,1.00}{+y^{2k+4}}  &+3y^{2q+2}                    &+2(-1)^py^p \\
&+2(-1)^py^{p+4+2k} &\textcolor[rgb]{0.00,0.00,1.00}{+2(-1)^qy^q}           &-y^{2q}   \\
&+4(-1)^{p+1+q}y^{p+4+q}              &+8(-1)^{p+q}y^{p+2+q}                &-y^{2p}   \\                     &+2(-1)^{1+q}y^{2p+3+q+2k}            &+4(-1)^{1+q}y^{2p+q+2k+2}            &+2(-1)^{1+q}y^{2p+6+q+2k}      \\
&+6(-1)^qy^{2p+4+q+2k}                &+2(-1)^qy^{2p+5+q+2k}                &+6(-1)^{1+q}y^{q+2}            \\
&+4(-1)^qy^{4+q}                      &+2(-1)^qy^{q+3}                      &+2(-1)^{1+q}y^{1+q}            \\
&+8(-1)^{p+1+q}y^{p+4+q+2k}           &+4(-1)^{p+q}y^{p+q+2k+2}             &+2(-1)^qy^{q+2k+3}             \\
&+4(-1)^{p+1+q}y^{p+q}                &+4(-1)^{p+q}y^{p+6+q+2k}             &+2(-1)^{1+q}y^{2p+3+q}         \\
&+2(-1)^qy^{2p+1+q}                   &+2(-1)^{1+q}y^{6+q+2k}               &+2(-1)^{1+q}y^{q+2k+5}         \\
&+2(-1)^qy^{q+2k+4}                   &+2(-1)^{1+q}y^{2p+2+q}               &+2(-1)^qy^{2p+q}.
\end{array}
\end{eqnarray*}

Similarly, plugging (\ref{LaplacianPolHn}) into Proposition \ref{LpolTheta}, and with the help of Maple, we obtain
\begin{eqnarray}\label{LThetawhole}
 y^{n'}(y^2-1)^3\phi(L(\Theta_{r,s,t}))+f'(y)&=&f_\Theta(r,s,t;y),
\end{eqnarray}
where $n'=r+s+t+2$, $f'(y)=1-2y-3y^2+4y^3+4y^4-4y^{2n'+2}-4y^{2n'+3}+3y^{2n'+4}+2y^{2n'+5}-y^{2n'+6}$, and
\begin{eqnarray*}
\begin{array}{llll}
f_\Theta(r,s,t;y)=
&y^{2t+6+2r}                    &+y^{4+2s}                                   &-y^{2r+4+2s} \\
&+2(-1)^{r+s}y^{2+r+s}          &-2y^{2t+5+2r}                               &-2y^{2r+2s+5} \\
&-2y^{2t+5+2s}                  &+2(-1)^{t+1+s}y^{4+s+t+2r}                  &+2(-1)^{t+1+r}y^{4+r+t+2s} \\ &+2(-1)^{r+s+1}y^{4+r+s+2t}     &-y^{6+2t+2s}                                &+4(-1)^{r+s}y^{6+r+s+2t}\\       &+4(-1)^{r+s+1}y^{4+r+s}        &+4(-1)^{t+1+r}y^{4+r+t}                     &+2(-1)^{r+s+1}y^{8+r+s+2t}\\     &+2(-1)^{r+s}y^{6+r+s}          &+2(-1)^{t+1+r}y^{8+r+t+2s}                  &+2(-1)^{r+t}y^{6+r+t} \\         &+4(-1)^{r+t}y^{6+r+t+2s}       &+2(-1)^{s+t}y^{6+s+t}                       &+4(-1)^{s+t}y^{6+s+t+2r} \\      &+4(-1)^{t+1+s}y^{4+s+t}        &+2(-1)^{t+1+s}y^{8+s+t+2r}                  &+2y^{5+2s}  \\
&+2y^{5+2r}                     &-y^{2t+4+2s}                                &-y^{2t+4+2r} \\                  &\textcolor[rgb]{0.00,0.00,1.00}{+y^{2t+4}}  &+y^{2s+6}                      &+y^{2r+6}    \\
&+y^{2t+6}                                   &-y^{6+2r+2s}                   &+y^{2r+4} \\
&+2y^{5+2t}                                  &+2(-1)^{r+t}y^{2+r+t}       &\textcolor[rgb]{0.00,0.00,1.00}{+2(-1)^{s+t}y^{2+s+t}}.
\end{array}
\end{eqnarray*}

\section{Main results}

\begin{lem}\label{LDumb}
No two non-isomorphic dumbbell graphs are $L$-cospectral.
\end{lem}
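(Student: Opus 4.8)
The plan is to show that two $L$-cospectral dumbbell graphs $D_{p,k,q}$ and $D_{p',k',q'}$ must have the same parameter triple. First I would collect the cheap spectral invariants supplied by Lemma \ref{spectrum}: the number of vertices gives $n:=p+q+k=p'+q'+k'$, and the number of spanning trees gives a second identity. Since a dumbbell graph has cyclomatic number $2$ and every path edge is a cut edge, a spanning tree is obtained precisely by deleting one edge from each of $C_p$ and $C_q$; hence its spanning-tree count is $\tau:=pq$, and cospectrality forces $pq=p'q'$. The crucial extra input is identity (\ref{LDumbwhole}): because $f(y)$ and the factor $y^{n}(y^2-1)^3$ depend only on $n$, the hypothesis $\phi(L(D_{p,k,q}))=\phi(L(D_{p',k',q'}))$ together with $n=n'$ forces the polynomial identity $f_D(p,k,q;y)=f_D(p',k',q';y)$. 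Thus the whole problem reduces to recovering $(p,k,q)$ from the explicit polynomial $f_D$, knowing already that $n$ and $pq$ are fixed.

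Second, I would read off the leading behaviour of $f_D$. Scanning the exponents, the only candidates for the top degree are $2p+q+2k+6$ and $2p+2q+2$: every other family (including the $p+2q+2k+\cdots$ family) is dominated once $p\ge q$. Using $n=p+q+k$ these two exponents equal $2n+6-q$ and $2n+2-2k$, so $\deg f_D=M=2n+\max(6-q,\,2-2k)$. The key point is that the leading coefficient records which term wins: it has absolute value $2$ (or $4$ when $p=q$) in the regime $q<2k+4$, it equals $-1$ in the regime $q>2k+4$, and it takes the exceptional value $-3$ (or $-5$ when $p=q$) on the boundary $q=2k+4$. Since the value-sets $\{\pm2,\pm4\}$, $\{-1\}$ and $\{-3,-5\}$ are pairwise disjoint, equality of the leading coefficients of $f_D$ and $f_D'$ forces $D_{p,k,q}$ and $D_{p',k',q'}$ into the \emph{same} regime. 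This is exactly what rules out the dangerous mixed situation in which the maximum is attained by different terms in the two graphs.

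Third, within a fixed regime the identities $M=M'$ and $n=n'$ yield one parameter outright. In the regime $q<2k+4$ they give $q=2n+6-M$, whence $p=\tau/q$ from the spanning-tree count and $k=n-p-q$; in the regime $q>2k+4$ they give $k=n+1-M/2$, after which $p+q=n-k$ together with $pq=\tau$ determines $\{p,q\}$ as the roots of $t^2-(n-k)t+\tau$, ordered by $p\ge q$. In either case $p$, $q$ and $k$ are functions of $n$, $\tau$ and $M$ alone, all of which agree for the two graphs; hence $(p,k,q)=(p',k',q')$ and $D_{p,k,q}\cong D_{p',k',q'}$.

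The main obstacle, and where I expect the genuine bookkeeping to lie, is justifying the leading-coefficient claim rigorously. One must confirm that no further monomial of $f_D$ collides with the top degree — the competition is confined to the $2p+q+\cdots$, $p+2q+\cdots$ and $2p+2q+\cdots$ families, which I would check degree by degree — and one must treat the coincidences $p=q$ and the boundary $q=2k+4$ with care, since there several top terms merge and the coefficient leaves its generic value $\pm2$. These are exactly the finitely many degenerate configurations flagged by an anomalous leading coefficient; once they are handled the regime is unambiguous and the argument above closes.
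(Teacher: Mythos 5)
Your proof is correct, but it runs the paper's argument in the opposite direction. After the common setup ($n=n'$ and $pq=p'q'$ from Lemma \ref{spectrum}, and $f_D(p,k,q;y)=f_D(p',k',q';y)$ from (\ref{LDumbwhole})), the paper compares the terms of \emph{smallest} exponent in $f_D$: these are $2(-1)^qy^{q}$ and $y^{2k+4}$ (the monomials highlighted in blue), and since the possible bottom coefficients coming from the $y^q$ side ($\pm2$, or $\pm4$ when $p=q$) can never be confused with the coefficient $1$ of $y^{2k+4}$, matching the bottom term immediately yields $q=q'$ or $k=k'$, after which the vertex and spanning-tree counts finish the job. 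You instead compare the terms of \emph{largest} exponent, $2p+q+2k+6$ versus $2p+2q+2$, which obliges you to (i) check that none of the roughly sixty monomials of $f_D$ reaches the top degree outside the coincidences $p=q$ and $q=2k+4$, and (ii) run the leading-coefficient regime analysis ($\{\pm2,\pm4\}$ versus $\{-1\}$ versus $\{-3,-5\}$) before solving for the parameters. I verified the collision bookkeeping you flag as the main obstacle and it does close: writing exponents as offsets from $2n$, the only competing families are $2n-p+6$, $2n-2q+6$, $2n-2k+1$ and $n+k+6$, all strictly dominated by $\max(2n-q+6,\,2n-2k+2)$ except in the coincidences you already treat, so your argument is complete in outline. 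It is simply heavier than the paper's because the low-degree end of $f_D$ is far less cluttered than the high-degree end; on the other hand, your explicit reconstruction of $(p,k,q)$ from $n$, $\tau=pq$ and the top degree $M$ is a tidy by-product that the paper's version leaves implicit.
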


\begin{proof}
Let $D_{p,k,q}$ and $D_{p',k',q'}$ be $L$-cospectral dumbbell graphs with $n=p+q+k$ and $n'=p'+q'+k'$ vertices, respectively. Without loss of generality, we let $p\geq q$ and $p'\geq q'$. By (a) and (d) of Lemma \ref{spectrum}, we have
\begin{eqnarray}
p+q+k&=&p'+q'+k'.\label{LDumbE01}\\
pq&=&p'q'.\label{LDumbE02}
\end{eqnarray}
By (\ref{LDumbwhole}), we then get
\begin{eqnarray}\label{LemDumb1}
f_D(p,k,q;y)=f_D(p',k',q';y).
\end{eqnarray}
Clearly, the term in $f_D(p,k,q;y)$ with the smallest exponent is $2(-1)^qy^{q}$ or $y^{2k+4}$ (written in blue), and similarly for $f_D(p',k',q';y)$. From (\ref{LemDumb1}) we have either $2(-1)^qy^{q}=2(-1)^{q'}y^{q'}$ or $y^{2k+4}=y^{2k'+4}$. In the former case, we have $q=q'$, and so $p=p'$ and $k=k'$ by (\ref{LDumbE01}) and (\ref{LDumbE02}). In the latter case, we have $k=k'$, and so $(p,q)=(p',q')$ by (\ref{LDumbE01}) and (\ref{LDumbE02}). Therefore, $D_{p,k,q}$ and $D_{p',k',q'}$ are isomorphic in each case.  \qed\end{proof}

\begin{lem}\label{LTheta}
No two non-isomorphic theta graphs are $L$-cospectral.
\end{lem}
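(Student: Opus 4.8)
The plan is to mirror the proof of Lemma \ref{LDumb}, replacing its two numerical invariants by the corresponding invariants of a theta graph. Let $\Theta_{r,s,t}$ and $\Theta_{r',s',t'}$ be $L$-cospectral, with $r\ge s\ge t$ and $r'\ge s'\ge t'$. First I would extract two scalar equations from Lemma \ref{spectrum}. Part (a) gives equal vertex counts, i.e. $r+s+t=r'+s'+t'$; write $m$ for this common value. Since $\Theta_{r,s,t}$ has $r+s+t+3$ edges and cyclomatic number $2$, a short count (a spanning tree must break two of the three $u$--$v$ cycles, choosing one edge from each of two of the three paths) shows that its number of spanning trees is $(r+1)(s+1)+(s+1)(t+1)+(t+1)(r+1)$, so part (d) yields
\[
(r+1)(s+1)+(s+1)(t+1)+(t+1)(r+1)=(r'+1)(s'+1)+(s'+1)(t'+1)+(t'+1)(r'+1).
\]
These are exactly the first two elementary symmetric functions of the triples $(r+1,s+1,t+1)$ and $(r'+1,s'+1,t'+1)$; they agree but do not yet force the triples to coincide, so a third relation is needed, and it will come from the polynomial $f_\Theta$.

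Because $n'=r+s+t+2=r'+s'+t'+2$, the auxiliary polynomial $f'(y)$ in (\ref{LThetawhole}) is the same for both graphs, so $L$-cospectrality (equality of $\phi(L(\Theta_{r,s,t}))$ and $\phi(L(\Theta_{r',s',t'}))$) gives $f_\Theta(r,s,t;y)=f_\Theta(r',s',t';y)$. Next I would read off the term of smallest exponent on each side. Among the monomials of $f_\Theta(r,s,t;y)$ the two lowest exponents are $2t+4$ (the term $y^{2t+4}$, coefficient $1$) and $2+s+t$ (the blue term $2(-1)^{s+t}y^{2+s+t}$): every other exponent is at least $\min\{2t+4,\,2+s+t\}$, because among the three exponents of the form $2+(\text{sum of two parameters})$ the smallest is $2+s+t$, and among the pure powers $y^{2t+4},y^{2t+5},\dots$ the smallest is $2t+4$. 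Matching the smallest-exponent terms of the two equal polynomials, and noting that the two possible leading coefficients ($1$ or $3$ versus $\pm2$) can never coincide, I obtain the dichotomy: either $t=t'$ (when the lowest exponent is $2t+4$ on both sides) or $s+t=s'+t'$ (when it is $2+s+t$ on both sides).

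Finally I would close each branch with the two symmetric-function equations. If $t=t'$, then $r+s=r'+s'$; in the spanning-tree identity the terms $(t+1)\big((r+1)+(s+1)\big)$ then agree, leaving $(r+1)(s+1)=(r'+1)(s'+1)$, so $\{r,s\}=\{r',s'\}$ and the orderings force $r=r'$, $s=s'$. If instead $s+t=s'+t'$, then $r=r'$, so the identity reduces to $(s+1)(t+1)=(s'+1)(t'+1)$; together with $s+t=s'+t'$ this gives $\{s,t\}=\{s',t'\}$, hence $s=s'$, $t=t'$. In either case $(r,s,t)=(r',s',t')$, so the two theta graphs are isomorphic. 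I expect the main obstacle to be the middle step: one must verify that $y^{2t+4}$ and $2(-1)^{s+t}y^{2+s+t}$ really are the two lowest-order terms of $f_\Theta$ for every admissible $(r,s,t)$ (including the boundary cases $t=0$ and $s=t$, and the coincidence $s=t+2$ where the two merge into $3y^{2t+4}$), and that the coefficient comparison separates the two branches cleanly; the remaining algebra is routine.
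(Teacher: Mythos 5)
Your proposal is correct and follows essentially the same route as the paper: vertex count and spanning-tree count from Lemma \ref{spectrum}, the identity $f_\Theta(r,s,t;y)=f_\Theta(r',s',t';y)$ from (\ref{LThetawhole}), the dichotomy between the lowest-order terms $y^{2t+4}$ and $2(-1)^{s+t}y^{2+s+t}$, and the closing of each branch via the two symmetric-function relations. Your extra care about the merged case $s=t+2$ (coefficient $3$) and the coefficient comparison is a slight refinement of a point the paper passes over silently, but the argument is the same.
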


\begin{proof}
Let $\Theta_{r,s,t}$ and $\Theta_{r',s',t'}$ be $L$-cospectral theta graphs with $n=r+s+t+2$ and $n'=r'+s'+t'+2$ vertices, respectively. Without loss of generality, we let $r\geq s\ge t$ and $r'\geq s'\ge t'$. By (a) and (d) of Lemma \ref{spectrum}, we have
\begin{eqnarray}
r+s+t&=&r'+s'+t'.\label{LThetaE01}\\
rs+rt+st&=&r's'+r't'+s't'.\label{LThetE02}
\end{eqnarray}
By (\ref{LThetawhole}), we then get
\begin{eqnarray}\label{LemTheta1}
f_\Theta(r,s,t;y)=f_\Theta(r',s',t';y).
\end{eqnarray}
Clearly, the term in $f_\Theta(r,s,t;y)$ with the smallest exponent is $2(-1)^{s+t}y^{2+s+t}$ or $y^{2t+4}$ (written in blue), and similarly for $f_\Theta(r',s',t';y)$. From (\ref{LemTheta1}) we have either $2(-1)^{s+t}y^{2+s+t}=2(-1)^{s'+t'}y^{2+s'+t'}$ or $y^{2t+4}=y^{2t'+4}$. In the former case, we have $s+t=s'+t'$, and so $r=r'$ by (\ref{LThetaE01}). This implies that $rs+rt=r's'+r't'$. By (\ref{LThetE02}), we then get $st=s't'$, which together with $s+t=s'+t'$ implies that $s=s'$ and $t=t'$. In the latter case, we have $t=t'$, and so $(r,s)=(r',s')$ by (\ref{LThetaE01}) and (\ref{LThetE02}). Therefore, $\Theta_{r,s,t}$ and $\Theta_{r',s',t'}$ are isomorphic in each case.  \qed\end{proof}

\begin{lem}\label{LDumbTheta}
There is no dumbbell graph $L$-cospectral with a theta graph.
\end{lem}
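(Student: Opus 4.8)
The plan is to show that a dumbbell graph $D_{p,k,q}$ and a theta graph $\Theta_{r,s,t}$ cannot share the same $L$-spectrum by extracting enough numerical invariants from Lemma \ref{spectrum} to force a contradiction, supplemented if necessary by comparing the closed-form polynomials in (\ref{LDumbwhole}) and (\ref{LThetawhole}). The first observation is structural: both families are connected unicyclic-type graphs with the same number of vertices equal to the number of edges. Indeed $D_{p,k,q}$ has $p+q+k$ vertices and $p+q+k+1$ edges (it contains two independent cycles joined by a path, hence cyclomatic number $2$), while $\Theta_{r,s,t}$ has $r+s+t+2$ vertices and $r+s+t+3$ edges (cyclomatic number $2$ as well). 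So assuming $L$-cospectrality, parts (a) and (b) of Lemma \ref{spectrum} give $n=n'$ and equal edge counts automatically; the cyclomatic numbers already agree, so this invariant alone does not separate the two families.

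The key invariant I would exploit is the degree sequence, accessed through part (e) of Lemma \ref{spectrum}, the sum of squares of vertex degrees, together with the number of spanning trees from part (d). A dumbbell graph $D_{p,k,q}$ has exactly two vertices of degree $3$ (the two vertices where the path meets the cycles, assuming $k\ge 1$; when $k=0$ there is a single vertex of degree $4$) and all remaining vertices of degree $2$. By contrast, a theta graph $\Theta_{r,s,t}$ has exactly two vertices of degree $3$ (the two branch vertices joined by the three paths) and all other vertices of degree $2$, provided $s,t\ge 1$; degenerate cases with $t=0$ or $s=t=0$ produce a vertex of degree $4$ or reduce to simpler graphs. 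Thus the raw degree-sequence information is dangerously similar, and I expect $\sum d_i^2$ to coincide in the generic case, so this invariant must be combined with the spanning-tree count. The number of spanning trees of $D_{p,k,q}$ is $pq$ (deleting one edge from each cycle, the two cycles contribute independently and the bridge path is forced), whereas the number of spanning trees of $\Theta_{r,s,t}$ is $rs+rt+st$ (the standard count for a theta graph, obtained by choosing which one of the three internal paths to break). So $L$-cospectrality forces $pq = rs+rt+st$ alongside $p+q+k = r+s+t$ after accounting for the vertex counts $p+q+k = r+s+t+2$.

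The plan is then to push these Diophantine constraints as far as they go, and where they do not close the argument, to invoke the explicit generating polynomials. From $n=n'$ we get $p+k+q = r+s+t+2$, so $n=n'$ makes the factor $y^n(y^2-1)^3$ in (\ref{LDumbwhole}) and $y^{n'}(y^2-1)^3$ in (\ref{LThetawhole}) identical, and since $f(y)=f'(y)$ when $n=n'$, $L$-cospectrality yields the clean identity $f_D(p,k,q;y)=f_\Theta(r,s,t;y)$ as polynomials in $y$. I would then compare these two polynomials term by term, exactly as in the proofs of Lemmas \ref{LDumb} and \ref{LTheta}. The decisive step is to examine the lowest-degree terms: the smallest exponent appearing in $f_D$ is $\min(q,2k+4)$ carrying coefficient $2(-1)^q$ or $1$, while the smallest exponent in $f_\Theta$ is $\min(s+t+2,2t+4)$ carrying coefficient $2(-1)^{s+t}$ or $1$. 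Matching these lowest terms, then the next lowest, should produce a parity or magnitude mismatch that no choice of parameters can satisfy; I also expect to need part (a) of Proposition \ref{LDTheta4}, since evaluating both polynomials (equivalently, both spanning-tree-related quantities) at $x=4$ gives the concrete arithmetic identity $\phi(L(D_{p,k,q});4)=\phi(L(\Theta_{r,s,t});4)$, which combined with $pq=rs+rt+st$ may already be contradictory once the parity correction terms are taken into account.

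The hard part will be the endgame term-comparison, because the two polynomials $f_D$ and $f_\Theta$ are long and their low-degree behavior depends on the parities of the parameters and on several tie-breaking cases (for instance whether $q<2k+4$ or $q>2k+4$ in the dumbbell, and the analogous split for the theta graph). I anticipate that the cleanest route is not a brute-force polynomial match but a short counting contradiction: a dumbbell graph contains two \emph{edge-disjoint} cycles (girth at most $\max(p,q)$ with two independent cycles), whereas in a theta graph every pair of cycles shares at least one edge. This difference in cycle structure should be detectable through the coefficients of the characteristic polynomial — concretely, through the number of closed walks or equivalently through the invariant $\sum d_i^2$ refined by the spanning-tree count, which already encodes whether the graph has an even or odd cyclomatic contribution of a given parity. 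I would therefore first attempt to derive a contradiction purely from $p+q+k=r+s+t+2$, $pq=rs+rt+st$, and $\phi(L;4)$ equality via Proposition \ref{LDTheta4}, resorting to the full term-by-term polynomial comparison only for the residual small cases that the Diophantine analysis fails to rule out.
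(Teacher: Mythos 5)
Your overall skeleton is the right one and matches the paper's: use parts (a) and (d) of Lemma \ref{spectrum} to get the vertex-count and spanning-tree equations, deduce $f_D(p,k,q;y)=f_\Theta(r,s,t;y)$ from (\ref{LDumbwhole}) and (\ref{LThetawhole}), compare lowest-degree terms, and call on Proposition \ref{LDTheta4}(a). But there are two genuine problems. First, your spanning-tree count for the theta graph is wrong under the paper's convention: since $\Theta_{r,s,t}$ joins the two hubs by paths with $r$, $s$, $t$ \emph{internal} vertices, the three branches have $r+1$, $s+1$, $t+1$ edges, so the number of spanning trees is $(r+1)(s+1)+(r+1)(t+1)+(s+1)(t+1)$, not $rs+rt+st$ (the latter is only a valid normalization when comparing two theta graphs to each other, as in Lemma \ref{LTheta}, because the linear correction $2(r+s+t)+3$ then cancels). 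This is not cosmetic: the paper's contradiction in the case $q=s+t+2$, $p=r-k$ is precisely the identity $st+(s+t)(k+2)+2k+3=0$, whose strictly positive left side comes from that correction term; with your formula $pq=rs+rt+st$ the same substitution gives $2r-k(s+t+2)=st$, which has solutions (e.g.\ $r=s=t=2$, $k=0$), so your version of the Diophantine step does not close.

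Second, the proposal stops at the point where the real work begins. "Matching these lowest terms \ldots should produce a parity or magnitude mismatch" is not something that happens automatically: both possible matches ($q=s+t+2$, and $2k+4=2t+4$, i.e.\ $k=t$) are perfectly consistent with the lowest-term comparison, and each requires its own separate argument --- the corrected spanning-tree identity for the first; the evaluation $\phi(L(D_{p,k,q});4)>\phi(L(\Theta_{r,s,t});4)$ via Proposition \ref{LDTheta4} for $k=t\ge1$; and, for $k=t=0$, a second round of lowest-term comparison after stripping off $y^4+2y^5+y^6$ followed by the inequality $pq=rs+2r<(r+1)(s+1)+(r+1)+(s+1)$. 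None of this is carried out. Finally, the fallback ideas in your last paragraph are dead ends: $\sum d_i^2$ cannot distinguish the two families (both have degree sequence $(3^2,2^{n-2})$; your claims about degree-$4$ vertices when $k=0$ or $t=0$ are also incorrect, as identifying pendant vertices still yields two degree-$3$ vertices), and closed-walk counts are adjacency-spectrum invariants, not Laplacian ones.
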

\begin{proof}
Let $D_{p,k,q}$ and $\Theta_{r,s,t}$ be $L$-cospectral theta graphs with $n=p+q+k$ and $n'=r+s+t+2$ vertices, respectively. Without loss of generality, we let $p\geq q\ge3$ and $r\geq s\ge t\ge0$. By (a) and (d) of Lemma \ref{spectrum}, we have
\begin{eqnarray}
p+q+k&=&r+s+t+2.\label{LDumbThetaE01}\\
pq&=&(r+1)(s+1)+(r+1)(t+1)+(s+1)(t+1).\label{LDumbThetaE02}
\end{eqnarray}
By (\ref{LDumbwhole}) and (\ref{LThetawhole}), we then get
\begin{eqnarray}\label{LemDumbTheta1}
f_D(p,k,q;y)=f_\Theta(r,s,t;y).
\end{eqnarray}
The term in $f_D(p,k,q;y)$ with the smallest exponent is $2(-1)^qy^{q}$ or $y^{2k+4}$, and the term in $f_\Theta(r,s,t;y)$ with the smallest exponent is $2(-1)^{s+t}y^{2+s+t}$ or $y^{2t+4}$. From (\ref{LemDumbTheta1}) we have either $2(-1)^qy^{q}=2(-1)^{s+t}y^{2+s+t}$ or $y^{2k+4}=y^{2t+4}$. In the former case, we have $q=s+t+2$, and so $p=r-k$ by (\ref{LDumbThetaE01}). Plugging $p$ and $q$ into (\ref{LDumbThetaE02}), we then get $st+(s+t)(k+2)+2k+3=0$, a contradiction. In the latter case, we have $k=t$. If $k=t\ge1$, plugging (\ref{LDumbThetaE01}) and (\ref{LDumbThetaE02}) into Proposition \ref{LDTheta4}, we then have $\phi(L(D_{p,k,q});4)\ge 4pqt-4(2t+1)(p+q)+16(t+1) =4rst+4(t+1)((p+q)(t-1)+3)+4 >4rst\ge \phi(L(\Theta_{r,s,t});4)$, a contradiction. If $k=t=0$, then (\ref{LemDumbTheta1}) implies that
\begin{eqnarray}\label{LemDumbTheta2}
f_D(p,0,q;y)-y^4-2y^5-y^6=f_\Theta(r,s,0;y)-y^4-2y^5-y^6.
\end{eqnarray}
Now, the term in $f_D(p,0,q;y)-y^4-2y^5-y^6$ with the smallest exponent is $2(-1)^qy^{q}$, and the term in $f_\Theta(r,s,0;y)-y^4-2y^5-y^6$ with the smallest exponent is $2(-1)^{s}y^{2+s}$. From (\ref{LemDumbTheta2}), we have $q=s+2$, and so $p=r$ by (\ref{LDumbThetaE01}). Then $pq=rs+2r<(r+1)(s+1)+(r+1)+(s+1)$, a contradiction to (\ref{LDumbThetaE02}). This completes the proof.
\qed\end{proof}

\begin{lem}\label{DTSETS}
Let $\mathscr{B}$ be the set of all dumbbell graphs and all theta graphs. If $G\in\mathscr{B}$ and $G'$ is $L$-cospectral with $G$, then $G'\in\mathscr{B}$.
\end{lem}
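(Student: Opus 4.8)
The plan is to combine the spectral invariants of Lemma~\ref{spectrum} with a short degree-sequence computation and a structural classification of a small family of bicyclic graphs. Suppose $G\in\mathscr{B}$ has $n$ vertices, and let $G'$ be $L$-cospectral with $G$. First I would record that every member of $\mathscr{B}$ is connected with exactly $n+1$ edges: a dumbbell $D_{p,k,q}$ has $p+q+k$ vertices and $p+q+k+1$ edges, while a theta graph $\Theta_{r,s,t}$ has $r+s+t+2$ vertices and $r+s+t+3$ edges. Moreover, in both cases exactly two vertices have degree $3$ and all remaining vertices have degree $2$, so the sum of squares of the degrees equals $2\cdot 3^2+(n-2)\cdot 2^2=4n+10$. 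By parts (a), (b), (c) and (e) of Lemma~\ref{spectrum}, $G'$ therefore has $n$ vertices, $n+1$ edges, a single component (hence $G'$ is connected), and sum of squares of degrees equal to $4n+10$.

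The first key step is to pin down the degree sequence of $G'$. Writing $n_j$ for the number of vertices of $G'$ of degree $j$, the three invariants above give $\sum_j n_j=n$, $\sum_j j\,n_j=2n+2$ and $\sum_j j^2 n_j=4n+10$. Forming the combination $\sum_j (j-2)^2 n_j=\sum_j j^2 n_j-4\sum_j j\,n_j+4\sum_j n_j$ yields $\sum_j (j-2)^2 n_j=2$. Since $(j-2)^2$ is a nonnegative integer vanishing only at $j=2$ and equal to $1$ exactly at $j\in\{1,3\}$, this forces all vertices of $G'$ to have degree $2$ except for exactly two vertices whose degrees lie in $\{1,3\}$. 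Re-imposing $\sum_j j\,n_j=2n+2$ eliminates the option of two vertices of degree $1$ (which gives edge-sum $2n-2$) and that of one vertex of degree $1$ together with one of degree $3$ (which gives $2n$), leaving only the possibility that $G'$ has exactly two vertices of degree $3$ and $n-2$ vertices of degree $2$. In particular the figure-eight graphs (two cycles sharing a vertex), which possess a vertex of degree $4$, are automatically excluded, since they would contribute $(4-2)^2=4$ to the sum.

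The remaining and most delicate step is purely structural: I would show that a connected simple graph $G'$ with exactly two vertices $u,v$ of degree $3$ and all other vertices of degree $2$ is either a dumbbell graph or a theta graph. To do this I would suppress every degree-$2$ vertex, replacing each maximal path through degree-$2$ vertices by a single edge, to obtain a connected multigraph $H$ on the two vertices $u,v$ with $\deg_H(u)=\deg_H(v)=3$. Letting $a$ be the number of $u$--$v$ edges and $\ell_u,\ell_v$ the numbers of loops at $u,v$, the equations $a+2\ell_u=3=a+2\ell_v$ force $\ell_u=\ell_v=\ell$ and $a=3-2\ell$ with $\ell\in\{0,1\}$. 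The case $\ell=0$, $a=3$ produces three internally disjoint $u$--$v$ paths, i.e.\ a theta graph, while the case $\ell=1$, $a=1$ produces two cycles joined by a path, i.e.\ a dumbbell graph. Simplicity of $G'$ guarantees that each suppressed cycle has length at least $3$ and that at most one of the three theta paths is a single edge, which matches the standing conventions $p\ge q\ge 3$, $k\ge 0$ and $r\ge s\ge t\ge 0$, $(s,t)\neq(0,0)$. Hence $G'\in\mathscr{B}$.

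I expect the degree-sequence extraction to be routine once the identity $\sum_j(j-2)^2 n_j=2$ is written down, and the main obstacle to be the structural classification: making the suppression argument rigorous and exhaustive, so that no connected graph with this degree sequence is overlooked and so that simplicity is correctly invoked to recover exactly the paper's parameter ranges.
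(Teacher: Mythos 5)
Your proposal is correct and follows essentially the same route as the paper: both use parts (a), (b), (c), (e) of Lemma~\ref{spectrum} to set up the three degree-sequence equations and then take a linear combination (you use $\sum_j(j-2)^2n_j=2$, the paper uses $\sum_i(i-1)(i-2)x_i=4$) to force exactly two vertices of degree $3$ and the rest of degree $2$. Your final paragraph merely spells out the standard bicyclic-base classification that the paper leaves implicit in its closing ``Thus, $G'\in\mathscr{B}$,'' which is a welcome but not essentially different addition.
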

\begin{proof}
Suppose $G$ has $n$ vertices. Then the degree sequence $\deg(G)=(3^2,2^{n-2})$, where the exponent denotes the number of vertices in $G$ having the corresponding degree. Note that $G'$ is connected by (c) of Lemma \ref{spectrum}. Denote by $x_i$ the number of vertices with degree $i$ in $G'$ for $i=1,2,\ldots$. Then (a), (b) and (e) of Lemma \ref{spectrum} imply that
\begin{eqnarray}
& &  \sum_{i\ge1}x_i=n,  \label{DTSUM1}\\
& &  \sum_{i\ge1}ix_i=2(n+1),  \label{DTSUM2}\\
& & \sum_{i\ge1}i^2x_i=2\times9+4(n-2)=4n+10.\label{DTSUM3}
\end{eqnarray}
Combining the equations above, we get $$\sum_{i\ge1}(i^2-3i+2)x_i=4,$$
which implies that the largest degree of $G'$ is $3$ and $x_3=2$. By (\ref{DTSUM1}) and (\ref{DTSUM2}), we then have $x_1=0$ and $x_2=n-2$. Thus, $G'\in\mathscr{B}$.
\qed\end{proof}

Combining Lemmas \ref{LDumb}--\ref{DTSETS}, we conclude that
\begin{thm}
Every dumbbell graph as well as every theta graph is determined by its $L$-spectrum.
\end{thm}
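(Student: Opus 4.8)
The plan is to deduce the theorem from the four lemmas \ref{LDumb}--\ref{DTSETS} by a short case analysis, since these lemmas together account for every way a cospectral mate could behave. Let $G\in\mathscr{B}$, so that $G$ is either a dumbbell graph or a theta graph, and let $G'$ be an arbitrary graph that is $L$-cospectral with $G$; the goal is to show $G'\cong G$.

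First I would invoke Lemma \ref{DTSETS} to confine $G'$ to the family $\mathscr{B}$ as well, that is, to conclude that $G'$ is itself a dumbbell graph or a theta graph. This is the step that performs the essential narrowing: it uses the fact that the $L$-spectrum determines the number of vertices, edges, and components, together with the sum of squares of the vertex degrees (Lemma \ref{spectrum}), and these data force $G'$ to be connected with degree sequence $(3^2,2^{n-2})$, which is exactly the degree profile common to all members of $\mathscr{B}$. Once $G'\in\mathscr{B}$ is known, I would split into cases according to the types of $G$ and $G'$. If both are dumbbell graphs, Lemma \ref{LDumb} gives $G'\cong G$; if both are theta graphs, Lemma \ref{LTheta} gives $G'\cong G$; and the mixed configuration, in which one of $G,G'$ is a dumbbell graph and the other a theta graph, is ruled out entirely by Lemma \ref{LDumbTheta}. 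Hence $G'\cong G$ in every admissible case, which is precisely the statement that $G$ is determined by its $L$-spectrum.

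For this concluding assembly the main obstacle is essentially nonexistent, because all the genuine difficulty has already been absorbed into the preceding lemmas. The substantive work lies upstream, in deriving the closed generating polynomials $f_D(p,k,q;y)$ and $f_\Theta(r,s,t;y)$ appearing in (\ref{LDumbwhole}) and (\ref{LThetawhole}) and in the careful comparison of their lowest-degree monomials, reinforced by the elementary spectral invariants furnished by Lemma \ref{spectrum}; granted these ingredients, the theorem follows in a single line, and the only real risk is an oversight in the case enumeration, which is why I would lay out all three type-combinations explicitly rather than leave the mixed case implicit.
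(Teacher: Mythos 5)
Your proposal is correct and follows exactly the paper's argument: the paper likewise derives the theorem by combining Lemmas \ref{LDumb}--\ref{DTSETS}, using Lemma \ref{DTSETS} to force any $L$-cospectral mate into the family $\mathscr{B}$ and the other three lemmas to settle the dumbbell--dumbbell, theta--theta, and mixed cases. Your explicit case enumeration is just a spelled-out version of the paper's one-line conclusion.
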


\end{document}